\providecommand{\U}[1]{\protect\rule{.1in}{.1in}}
\newtheorem{theorem}{Theorem}
\newtheorem{algorithm}[theorem]{Algorithm}
\newtheorem{lemma}[theorem]{Lemma}
\newtheorem{remark}[theorem]{Remark}
\newcommand\BibTeX{{\rmfamily B\kern-.05em \textsc{i\kern-.025em b}\kern-.08em
T\kern-.1667em\lower.7ex\hbox{E}\kern-.125emX}}
\begin{document}

\title{ }

\title{Hierarchical {S}chur complement preconditioner
for the stochastic {G}alerkin finite element methods
\begin{center}
\small\emph{Dedicated to Professor Ivo Marek on the occasion of his 80th birthday.}
\end{center}
}
\runningheads{B.~Soused\'{i}k et al.}%
{Hierarchical Schur complement preconditioner}
\author{Bed\v{r}ich Soused\'{\i}k\affil{1,2}
Roger G. Ghanem\affil{1}
Eric T. Phipps\affil{3}}
\address{
\affilnum{1}
Department of Aerospace and Mechanical Engineering,
and
Department of Civil and Environmental Engineering,
University of Southern California,
Los Angeles, CA 90089-2531, USA
\break\affilnum{2}
Institute of Thermomechanics,
Academy of Sciences of the Czech Republic,
Dolej\v{s}kova 1402/5, 182~00 Prague~8, Czech Republic
\break\affilnum{3}
Sandia National Laboratories, Albuquerque, NM, USA.}
\cgs{Support from DOE/ASCR is gratefully acknowledged.
B. Soused\'{\i}%
k has been also supported in part by the Grant Agency of the Czech Republic \mbox
{GA \v{C}R 106/08/0403}.
}
\corraddr{B. Soused\'{\i}k,
University of Southern California,
Department of Aerospace and Mechanical Engineering,
Olin Hall (OHE) 430, Los Angeles, CA 90089-2531.
E-mail: sousedik@usc.edu}
\begin{abstract}
Use of the stochastic Galerkin finite element methods leads to large systems
of linear equations obtained by the discretization of tensor product solution
spaces along their spatial and stochastic dimensions. These systems are
typically solved iteratively by a Krylov subspace method. We propose a
preconditioner which takes an advantage of the recursive hierarchy in the
structure of the global matrices. In particular, the matrices posses a
recursive hierarchical two-by-two structure, with one of the submatrices block
diagonal. Each one of the diagonal blocks in this submatrix is closely related
to the deterministic mean-value problem, and the action of its inverse is in the implementation
approximated by inner loops of Krylov iterations. Thus our hierarchical
Schur complement preconditioner
combines, on each level in the approximation of the hierarchical structure of
the global matrix, the idea of Schur complement with loops for a number of
mutually independent inner Krylov iterations, and several matrix-vector
multiplications for the off-diagonal blocks. Neither the global matrix, nor
the matrix of the preconditioner need to be formed explicitly. The ingredients
include only the number of stiffness matrices from the truncated
Karhunen-Lo\`{e}ve expansion and a good preconditioned for the mean-value
deterministic problem.
We provide a condition number bound for a model elliptic problem and
the performance of the method is
illustrated by numerical experiments.
\end{abstract}
\keywords{stochastic {G}alerkin finite element methods; iterative methods;
preconditioning; Schur complement; hierarchical and multilevel preconditioning}
\maketitle

\section{Introduction}

\label{sec:introduction}A set-up of mathematical models
requires
information about input data. When using partial differential equations
(PDEs), the exact values of boundary and initial conditions along with the
equation coefficients are often not known exactly and instead they need to be
treated with uncertainty. In this study we consider the coefficients as random
parameters. The most straightforward technique of solution is the famous Monte
Carlo method. More advanced techniques, which have became quite
popular recently,
include stochastic finite element methods. There are two main variants of
stochastic finite elements: collocation
methods~\cite{Babuska-2010-SCM,Xiu-2005-HCM} and stochastic Galerkin
methods~\cite{Babuska-2005-GFE,Babuska-2005-SEB,Ghanem-1991-SFE}. Both methods
are defined using tensor product spaces for the spatial and stochastic
discretizations. Collocation methods sample the stochastic PDE at a set of
collocation points, which yields a set of mutually independent deterministic
problems. Because one can use existing software to solve this set of problems,
collocation methods are often referred to as non-intrusive. However, the
number of collocation points can be quite prohibitive when high accuracy is
required or when the stochastic problem is described by a large number of
random variables.

On the other hand, the stochastic Galerkin method is intrusive. It uses the
spectral finite element approach to transform a stochastic PDE\ into a coupled
set of deterministic PDEs, and because of this coupling, specialized solvers
are required. The design of iterative solvers for systems of linear algebraic
equations obtained from discretizations by stochastic Galerkin finite element
methods has received significant attention recently. It is well known that
suitable preconditioning can significantly improve convergence of Krylov
subspace iterative methods. Among the most simple, yet quite powerful methods,
belongs the mean-based preconditioner by Powell and
Elman~\cite{Powell-2009-BDP}, cf. also~\cite{Ernst-2009-ESL}. Further
improvements include, e.g., the Kronecker product preconditioner by
Ullmann~\cite{Ullmann-2012-EIS}. We refer to Rosseel and
Vandewalle~\cite{Rosseel-2010-ISS} for a more complete overview and comparison
of various iterative methods and preconditioners, including matrix splitting
and multigrid techniques.
Also, an interesting approach to solver parallelization
can be found in the work of Keese and Matthies~\cite{Keese-2005-HPS}.

Schur complements are historically well known from substructuring and, in
particular, from the iterative substructuring class of the domain
decomposition methods cf., e.g.,
monographs~\cite{Smith-1996-DD,Toselli-2005-DDM}. However they have also shown
to posses interesting mathematical properties, and they have been studied
independently~\cite{Axelsson-1994-ISM-1Ed,Vassilevski-2008-MBF-1Ed}. The basic
idea is to partition the problem and reorder its matrix representation such
that a direct elimination of a part of the problem becomes straightforward.
This reordering can be also performed recursively, which leads to the
recursive Schur complement
methods~\cite{Chen-2005-MPT,Kraus-2012-ASC,Zhang-2000-PSC}. The multilevel
Schur complement preconditioning in multigrid framework can be, to the best of
our knowledge, traced back to Axelsson and
Vassilevski~\cite{Axelsson-1989-AMP,Axelsson-1990-AMP}. The Algebraic
Recursive Multilevel Solver (ARMS) by Saad and Suchomel~\cite{Saad-2002-ARMS}
and its parallel version (pARMS) by Li et al.~\cite{Li-2003-pARMS} use
variants of incomplete LU\ decompositions, and they are also closely related
to the Hierarchical Iterative Parallel Solver (HIPS) by Gaidamour and
H\'{e}non~\cite{Gaidamour-2008-PDS}.
We also note that a remarkable idea for preconditioning non-symmetric systems
using an approximate Schur complement has been proposed by Murphy, Golub and
Wathen~\cite{Murphy-2000-NPI}.

In this paper, we propose a symmetric preconditioner which takes advantage of
the recursive hierarchy in the structure of the global system matrices. This
structure is obtained directly from the stochastic formulation. In particular,
the matrices posses a recursive hierarchical two-by-two structure,
cf.~\cite{Ghanem-1996-NSS,Pellissetti-2000-ISS}, where one of the submatrices
is block diagonal and therefore its inverse can be computed by inverting each
of the blocks independently. Moreover, each of the diagonal blocks is closely
related to the deterministic mean-value problem. In fact, the diagonal blocks
are obtained simply by rescaling the mean-value matrix in the case of linear
Karhunen-Lo\`{e}ve expansion. So, assuming that we have a good preconditioner
for the mean available, each block can be solved iteratively by an inner loop
of Krylov iterations. Doing so, our hierarchical Schur complement
preconditioner becomes variable because it combines, on each level in the
approximation of the hierarchical structure of the global matrix, the idea of
the Schur complement with loops for a number of mutually independent inner
Krylov iterations, and several matrix-vector multiplications for the
off-diagonal blocks. Due to variable preconditioning one has to make a careful
choice of Krylov subspace methods, and their variants such as flexible
conjugate gradients~\cite{Notay-2000-FCG}, FGMRES~\cite{Saad-1993-FIP}, or
GMRESR~\cite{vanderVorst-1994-GFN} are preferred. However, in our numerical
experiments, we have obtained the same convergence with the flexible and the
standard versions of conjugate gradients. It is important to note that neither
the global matrix, nor the preconditioner need to be formed explicitly, and we
can use the so called MAT-VEC operations from~\cite{Pellissetti-2000-ISS} in
both matrix-vector multiplications: by a global system matrix in the loop of
outer iterations and in the action of the preconditioner. The ingredients of
our method thus include only the number of stiffness matrices from the
truncated Karhunen-Lo\`{e}ve expansion and a good preconditioner for the
mean-value deterministic problem. Therefore the method can be regarded as
minimally intrusive because it can be built as a wrapper around an existing
solver for the corresponding mean-value problem.
Nevertheless in this contribution we neither address the parallelization nor
the choice of the preconditioner for the mean-value problem. These two topics
would not change the convergence in terms of outer iterations, and they will
be studied elsewhere.

The paper is organized as follows. In Section~\ref{sec:model} we introduce the
model problem, in Section~\ref{sec:structure} we discuss the structure of the
stochastic matrices, in Section~\ref{sec:schur} we formulate the hierarchical
Schur complement preconditioner and provide a condition number bound under
suitable assumptions, in Section~\ref{sec:variants} we outline possible
variants of the method and provide details of our implementation, and finally,
in Section~\ref{sec:numerical} we illustrate the performance of the algorithm
by numerical experiments, and in Section~\ref{sec:summary} we provide a short
summary and a conclusion of the work presented in this paper.

\section{Model problem and its discretization}

\label{sec:model}Let $D$ be a domain in $%
\mathbb{R}
^{d}$, $d=2$, and let $\left(  \Omega,\mathcal{F},\mu\right)  $ be a complete
probability space, where $\Omega$ is the sample space, $\mathcal{F}$ is the
$\sigma-$algebra generated by $\Omega$ and $\mu:\mathcal{F}\rightarrow\left[
0,1\right]  $ is the probability measure. We are interested in a solution of
the following elliptic boundary value problem: find a random function
$u\left(  x,\omega\right)  :\overline{D}\times\Omega\rightarrow%
\mathbb{R}
$ which almost surely (a.s.) satisfies the equation%
\begin{align}
-\nabla\cdot\left(  k\left(  x,\omega\right)  \,\nabla u\left(  x,\omega
\right)  \right)   &  =f\left(  x\right)  \qquad\text{in}\ D\times
\Omega,\label{eq:model-1}\\
u\left(  x,\omega\right)   &  =0\qquad\text{on}\ \partial D\times
\Omega,\label{eq:model-2}%
\end{align}
where $f\in L^{2}\left(  D\right)  $, and $k\left(  x,\omega\right)  $ is a
random scalar field with a probability density function$~d\mu\left(
\omega\right)  .$ We note that the gradient symbol $\nabla$ denotes the
differentiation with respect to the spatial variables. Also, we will assume
that there exist two constants $0<k_{\min}\leq k_{\max}$\ such that%
\[
\mu\left(  \omega\in\Omega:k_{\min}\leq k\left(  x,\omega\right)  \leq
k_{\max}\;\forall x\in\overline{D}\right)  =1.
\]
In the weak formulation of problem (\ref{eq:model-1})-(\ref{eq:model-2}),
we would like to solve%
\begin{equation}
u\in U:\ a\left(  u,v\right)  =\left\langle f,v\right\rangle ,\qquad\forall
v\in U.\label{eq:model-variational}%
\end{equation}
Here $f\in U^{\prime}$ with $U^{\prime}$ denoting the dual of $U$ and
$\left\langle \cdot,\cdot\right\rangle $ the duality pairing. The space $U$
and its norm are defined, using a tensor product and expectation $\mathbb{E}$
with respect to the measure$~\mu$, as
\[
U=H_{0}^{1}\left(  D\right)  \otimes L_{\mu}^{2}\left(  \Omega\right)
,\qquad\left\Vert u\right\Vert _{U}=\sqrt{\mathbb{E}\left[  \int_{D}\left\vert
\nabla u\right\vert ^{2}dx\right]  }.
\]
The bilinear form $a$ and right-hand side are
\[
a\left(  u,v\right)  =\mathbb{E}\left[  \int_{D}k\left(  x,\omega\right)
\,\nabla u\cdot\nabla v\,dx\right]  ,\qquad\left\langle f,v\right\rangle
=\mathbb{E}\left[  \int_{D}f\,v\,dx\right]  .
\]
Next, let us define the stochastic operator $K_{\omega}:U\rightarrow
U^{\prime}$
by
\begin{equation}
a\left(  u,v\right)  =\left\langle K_{\omega}u,v\right\rangle ,\qquad\forall
u,v\in U.\label{eq:a-K}%
\end{equation}
So the problem~(\ref{eq:model-variational}) can be now equivalently written as
the stochastic operator equation
\begin{equation}
\left\langle K_{\omega}\,u,v\right\rangle =\left\langle f,v\right\rangle
,\qquad\forall v\in U.\label{eq:model-operator}%
\end{equation}
The operator $K_{\omega}$\ is stochastic via the random parameter$~k\left(
x,\omega\right)  $. Assuming that its covariance function $C\left(
x_{1},x_{2}\right)  $ is known, we will further assume that it has the linear
Karhunen-Lo\`{e}ve (KL) expansion truncated after $N$ terms as%
\begin{equation}
k\left(  x,\omega\right)  =\sum_{i=0}^{N}k_{i}\left(  x\right)  \xi_{i}\left(
\omega\right)  ,\quad\xi_{0}=1,\quad\xi_{i}\sim\text{U}\left[  0,1\right]
\quad i=1,\dots,N,\label{eq:KL-expansion}%
\end{equation}
such that $\xi_{i}\left(  \omega\right)  $, $i>0$\ are identically
distributed, independent random variables. Here $k_{0}$ is the mean of the
random field, and $k_{i}\left(  x\right)  =\sqrt{\lambda_{i}}v_{i}\left(
x\right)  $ where $\left(  \lambda_{i},v_{i}\left(  x\right)  \right)
_{i\geq1}$ are the solutions of the integral eigenvalue problem
\begin{equation}
\int_{D}C\left(  x_{1},x_{2}\right)  v_{i}\left(  x_{2}\right)  dx_{2}%
=\lambda_{i}v_{i}\left(  x_{1}\right)  ,\label{eq:int-eigproblem}%
\end{equation}
see~\cite{Ghanem-1991-SFE}\ for details. For the numerical experiments in this
paper, we made a specific choice
\begin{equation}
C\left(  x_{1},x_{2}\right)  =\sigma^{2}\exp\left(  -\left\Vert x_{1}%
-x_{2}\right\Vert _{1}/L\right)  ,\label{eq:cov}%
\end{equation}
with $\sigma^{2}$ denoting the variance, and $L$ the correlation length of the
random variables$~\xi_{i}\left(  \omega\right)  $. Efficient computational
methods for solution of the eigenvalue problem (\ref{eq:int-eigproblem}) are
described, e.g., in~\cite{Schwab-2006-KLA}.

Using the KL expansion of $k$ in the definition of the operator $K_{\omega}$
in~(\ref{eq:a-K}), we obtain%
\begin{equation}
\left\langle K_{\omega}u,v\right\rangle =\left\langle \sum_{i=0}^{N}\xi
_{i}\left(  \omega\right)  k_{i}\left(  x\right)  u\left(  x,\omega\right)
,v\left(  x,\omega\right)  \right\rangle . \label{eq:gPC-K}%
\end{equation}

\begin{remark}
\label{rem:K-L} More generally than (\ref{eq:KL-expansion}), we can consider
the generalized polynomial chaos (gPC) expansion of $k$ as%
\[
k\left(  x,\omega\right)  =\sum_{i=0}^{M^{\prime}}k_{i}\left(  x\right)
\psi_{i}\left(  \mathbb{\xi(\omega)}\right)  .
\]
In both cases, we write
\[
k\left(  x,\omega\right)  =\sum_{i=0}^{L}k_{i}\left(  x\right)  \psi
_{i}\left(  \mathbb{\xi(\omega)}\right)  ,
\]
for $L=N$ in the KL case and $L=M^{\prime}$ in the gPC case.
\end{remark}

We will consider discrete approximations to the solution
to~\eqref{eq:model-operator} given by finite element discretizations of
$H_{0}^{1}\left(  D\right)  $ and generalized polynomial chaos (gPC)
discretizations of $L_{\mu}^{2}\left(  \Omega\right)  $, namely
\begin{equation}
u=\sum_{i=1}^{N_{dof}}\sum_{j=0}^{M}u_{ij}\phi_{i}(x)\psi_{j}\left(  \xi
_{0},\dots,\xi_{N}\right)  , \label{eq:gPC-u}%
\end{equation}
where $\left\{  \phi_{i}(x) \right\}  _{i=1}^{N_{dof}}$ are suitable finite
element basis functions, the gPC\ basis $\left\{  \psi_{j}(\xi) \right\}
_{j=0}^{M}$ is obtained as the the tensor product of Legendre polynomials of
total order at most $P$ and $\xi= \left(  \xi_{0},\dots,\xi_{N}\right)  $. The
choice of Legendre polynomials is motivated by the fact that these are
orthogonal with respect to the probability measure associated with the uniform
random variables $\xi_{0},\dots,\xi_{N}$. The total number of gPC polynomials
is thus $M+1=\frac{(N+P)!}{N!P!}$, cf. also~\cite[p. 87]{Ghanem-1991-SFE}.

Substituting the expansions (\ref{eq:gPC-K}) and (\ref{eq:gPC-u}) into
(\ref{eq:model-operator}) yields a deterministic linear system of equations
\begin{equation}
\sum_{j=0}^{M}\sum_{i=0}^{L}c_{ijk}K_{i}u_{j}=f_{k},\qquad k=0,\dots
,M,\label{eq:global-system}%
\end{equation}
where $(f_{k})_{l}=\mathbb{E}\left[  \int_{D}f\left(  x\right)  \phi
_{l}\left(  x\right)  \psi_{k}\,dx\right]  $, $\left(  K_{i}\right)
_{lm}=\int_{D}k_{i}(x)\phi_{l}(x)\phi_{m}(x)\,dx$, and the coefficients
$c_{ijk}=\mathbb{E}\left[  \psi_{i}\psi_{j}\psi_{k}\right]  $. Each one of the
blocks~$K_{i}$ is thus a deterministic\ stiffness matrix given by$~k_{i}%
\left(  x\right)  $, cf.~(\ref{eq:gPC-K}), of size $\left(  N_{dof}\times
N_{dof}\right)  $, where $N_{dof}$ is the number of spatial degrees of
freedom. The system~(\ref{eq:global-system}) is then given by a global matrix
of size $\left(  \left(  M+1\right)  N_{dof}\times\left(  M+1\right)
N_{dof}\right)  $, consisting of $N_{dof}\times N_{dof}$\ blocks $K^{\left(
j,k\right)  }$, and it can be written as
\begin{equation}
\left[
\begin{array}
[c]{ccccc}%
K^{\left(  0,0\right)  } & K^{\left(  0,1\right)  } & \cdots &  & K^{\left(
0,M\right)  }\\
& \ddots &  &  & \\
\vdots &  & K^{\left(  k,k\right)  } &  & \vdots\\
&  &  & \ddots & \\
K^{\left(  M,0\right)  } & K^{\left(  M,1\right)  } & \cdots &  & K^{\left(
M,M\right)  }%
\end{array}
\right]  \left[
\begin{array}
[c]{c}%
u_{0}\\
\vdots\\
u_{k}\\
\vdots\\
u_{M}%
\end{array}
\right]  =\left[
\begin{array}
[c]{c}%
f_{0}\\
\vdots\\
f_{k}\\
\vdots\\
f_{M}%
\end{array}
\right]  ,\label{eq:global-matrix}%
\end{equation}
where each of the blocks $K^{\left(  j,k\right)  }$ is in the KL\ case
obtained as
\begin{equation}
K^{\left(  j,k\right)  }=\sum_{i=0}^{N}c_{ijk}K_{i}%
.\label{eq:global-matrix-block}%
\end{equation}

\begin{remark}
With an iterative solution of~(\ref{eq:global-matrix}) in mind, one needs to
store only the constants$~c_{ijk}$, the blocks $K_{i}$ and use the formula
(\ref{eq:global-matrix-block}) for matrix-vector multiplication, see MAT-VEC
operations in~\cite{Pellissetti-2000-ISS}.
\end{remark}

It is important to note that the first diagonal block is obtained by the
$0-$th order polynomial chaos expansion and therefore it corresponds to the
deterministic problem obtained using the mean value of the coefficient$~k$, in
particular
\[
K^{\left(  0,0\right)  }=K_{0}.
\]

The sparsity structure of the matrix in~(\ref{eq:global-matrix}) will in
general depend on the type of the gPC\ polynomial basis, on the number of
terms retained in the expansions (\ref{eq:gPC-K}) and (\ref{eq:gPC-u}), and
also on the number of stochastic dimensions. Nevertheless, due to the
orthogonality of the gPC\ basis functions, the constants$~c_{ijk}$\ will
vanish for many combinations of the indices~$i$, $j$, and $k.$ The block
sparsity structure of the global stochastic Galerkin matrix~in
(\ref{eq:global-matrix}), with the blocks given by
(\ref{eq:global-matrix-block}), will depend on a matrix$~c_{P}$ with entries
$c^{\left(  j,k\right)  }=\sum_{i=0}^{N}c_{ijk}$, where$~j,k=0,\dots,M$. The
typical structure of$~c_{P}$ is illustrated by Figure~\ref{fig:NP}. Looking
carefully at the figures, we can observe a block hierarchical structure of the
matrices. In the next section, we will study this structure in somewhat more detail.


\begin{figure}[th]
\centering
\subfigure[$N=4$, $P=4$ gives $350$ blocks]{
\includegraphics[width=6.5cm]{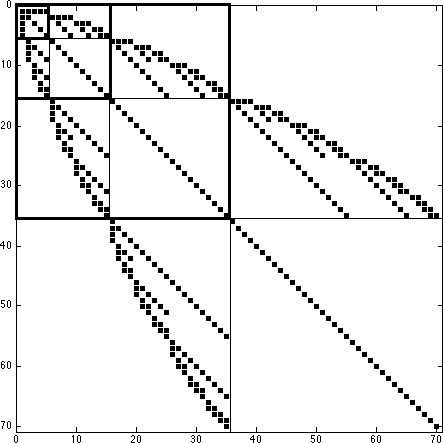} \label{fig:N4P4}
} \subfigure[$N=4$, $P=7$ gives $2010$ blocks]{
\includegraphics[width=6.5cm]{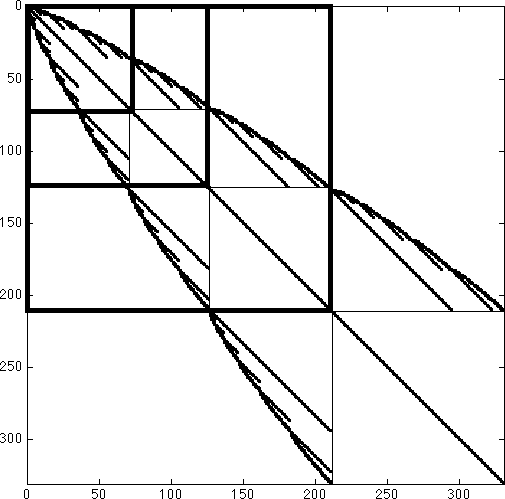}
\label{fig:N4P7}
}\caption{Hierarchical structure of the matrix $c_{P}$ which determines the
block sparsity of the global stochastic Galerkin matrix with $N=4$ stochastic
dimensions using~\subref{fig:N4P4} $P=4$, or~\subref{fig:N4P7} $P=7$ order of
polynomial expansion. The sub-blocks correspond to the polynomials of order
\subref{fig:N4P4} $P=1,2,3$ and~\subref{fig:N4P7} $P=4,5,6$.}%
\label{fig:NP}%
\end{figure}

\section{Structure of the model matrices}

\label{sec:structure}Let us begin by an illustration. Figure~\ref{fig:N4P4}
shows the structure of the stochastic Galerkin matrix based on the fourth
order polynomial chaos expansion in four stochastic dimensions. The schematic
matrix in the picture is$~c_{P}$ (here $P=4$), so in the global stochastic
Galerkin matrix as it is written in eq.~(\ref{eq:global-matrix}) each tile
corresponds to a block of a stiffness matrix with the same sparsity pattern as
the original finite element problem. Now, let us denote the corresponding
global Galerkin matrix by$~A_{4}$ , and by $A_{3}$, $B_{4}$, $C_{4}$ and
$D_{4}$ its four submatrices, cf.~(\ref{eq:global-matrix-hierarchy}). We see
that$~D_{4}$ is block diagonal and the structure of$~A_{3}$ resembles the
structure of$~A_{4}$ and this hierarchy is repeated all the way to the
$1\times1$ block$~A_{0}$ and a block diagonal matrix$~D_{1}$. The number in
the subscript indicates that the entries in the block correspond to the
polynomial expansion in the case of (a) $A_{3}$ of order three or less, and
(b) $B_{3},\ C_{3}\ $and$\ D_{3}$ of order three. Clearly, the sparsity and
hierarchical structure follows from orthogonality of the polynomials as was
pointed out in~\cite{Pellissetti-2000-ISS}. More specifically, let us consider
a two-by-two block structure of a~(square) coefficient matrix $c_{P}$ with
dimensions $\frac{\left(  N+P\right)  !}{N!P!}$ as
\[
c_{P}=\left[
\begin{array}
[c]{cc}%
c_{P-1} & b_{P}^{T}\\
b_{P} & d_{P}%
\end{array}
\right]  ,
\]
where $c_{P-1}$ is the first principal\ submatrix with dimensions
$\frac{\left(  N+P-1\right)  !}{N!\left(  P-1\right)  !}$, and the remaining
blocks are defined accordingly. Generally, let us consider a recursive
hierarchy in the spliting of $c_{P}$ as
\[
c_{\ell}=\left[
\begin{array}
[c]{cc}%
c_{\ell-1} & b_{\ell}^{T}\\
b_{\ell} & d_{\ell}%
\end{array}
\right]  ,\qquad\ell=P,\dots,1,
\]
where the dimensions of$~c_{\ell}$\ are given by $\frac{\left(  N+\ell\right)
!}{N!\ell!}$, the dimensions of the first principal submatrices$~c_{\ell-1}$
are given by $\frac{\left(  N+\ell-1\right)  !}{N!\left(  \ell-1\right)  !}$,
and the remaining blocks are defined accordingly. We note that even though the
matrices$~c_{\ell}$ are symmetric, the stochastic Galerkin matrix will be
symmetric only if each one of the matrices$~K_{i}$ is itself symmetric. We
refer, e.g., to~\cite{Rosseel-2010-ISS,Ernst-2010-SGM} for further details and
discussion, and state here only the essential observation for our approach:

\begin{lemma}
[{\cite[Corollary 2.6]{Rosseel-2010-ISS}}]The block $d_{\ell}$ is a diagonal matrix for all $\ell=1,\dots,P$.
\end{lemma}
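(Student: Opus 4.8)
The plan is to exploit the structure of the gPC basis: the Legendre polynomials $\psi_j$ are graded by total degree, and the entries of $c_\ell$ correspond exactly to those $\psi_j$ of total degree $\le \ell$, while the diagonal block $d_\ell$ collects the rows and columns indexed by the $\psi_j$ of total degree \emph{exactly} $\ell$. So the claim "$d_\ell$ is diagonal" amounts to: $c^{(j,k)} = \sum_{i=0}^N c_{ijk} = 0$ whenever $\psi_j$ and $\psi_k$ both have total degree exactly $\ell$ and $j\ne k$. Since $c^{(j,k)}$ is a sum over $i=0,\dots,N$ of the triple products $c_{ijk} = \mathbb{E}[\psi_i\psi_j\psi_k]$, and the only $\psi_i$ appearing in the KL expansion of $k$ are $\psi_0 = 1$ and the degree-one monomials $\psi_i = \xi_i$ for $i=1,\dots,N$, it suffices to show each such triple product vanishes.

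First I would set up notation: write each gPC basis function as a product of univariate Legendre polynomials, $\psi_j(\xi) = \prod_{m=1}^N p_{\alpha_m^{(j)}}(\xi_m)$ for a multi-index $\alpha^{(j)}$ with $|\alpha^{(j)}| = \sum_m \alpha_m^{(j)}$ equal to the total degree of $\psi_j$. The $p_n$ are orthogonal with respect to the (uniform) measure, so $\mathbb{E}[\psi_j \psi_k] = \delta_{jk}$ up to normalization, and more importantly $\mathbb{E}[\psi_i\psi_j\psi_k]$ factors as a product over the coordinates $m=1,\dots,N$ of univariate integrals $\mathbb{E}[p_{\alpha_m^{(i)}} p_{\alpha_m^{(j)}} p_{\alpha_m^{(k)}}]$.

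Next, the key step: the $\psi_i$ with $i=0,\dots,N$ have total degree at most $1$, so $\alpha^{(i)}$ is either $0$ (the constant) or a single unit vector $e_r$. In either case, comparing total degrees, $|\alpha^{(j)}| + |\alpha^{(i)}| + |\alpha^{(k)}|$ is $2\ell$ or $2\ell+1$, an odd or even number. For the univariate triple product $\mathbb{E}[p_a p_b p_c]$ to be nonzero one needs the triangle-type conditions on $(a,b,c)$ and, crucially, that $a+b+c$ be even (a parity constraint, since Legendre polynomials have definite parity $p_n(-x) = (-1)^n p_n(x)$ on a symmetric interval — here the symmetry is about the midpoint of $[0,1]$). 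Summing the parity constraint over all $N$ coordinates forces $|\alpha^{(i)}| + |\alpha^{(j)}| + |\alpha^{(k)}|$ to be even, i.e.\ $|\alpha^{(i)}| = 0$: the constant term. But if $\psi_i = \psi_0 = 1$, then $c_{0jk} = \mathbb{E}[\psi_j\psi_k] = 0$ for $j\ne k$ by orthogonality. Hence every term in $c^{(j,k)} = \sum_{i=0}^N c_{ijk}$ vanishes when $|\alpha^{(j)}| = |\alpha^{(k)}| = \ell$ and $j\ne k$, which is exactly the off-diagonal part of $d_\ell$.

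I expect the main obstacle to be purely bookkeeping: making precise the correspondence between the index ranges of $c_\ell$, the multi-indices $\alpha^{(j)}$ of bounded total degree, and the submatrix $d_\ell$ (the "rows/columns of degree exactly $\ell$"), and confirming that in the KL case the expansion of $k$ truly involves only degree-$\le 1$ polynomials $\psi_i$. One should also double-check the parity argument is clean — that the shift of $[-1,1]$ to $[0,1]$ preserves the reflection symmetry used — but this is standard. Once the parity constraint is in place the conclusion is immediate, so I would present the parity/total-degree argument as the heart of the proof and relegate the index bookkeeping to a sentence referencing the grading of the gPC basis. Since the result is quoted from \cite[Corollary 2.6]{Rosseel-2010-ISS}, a short proof along these lines (or a direct citation) is all that is needed here.
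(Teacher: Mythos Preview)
The paper itself offers no proof of this lemma at all; it is simply quoted from \cite[Corollary~2.6]{Rosseel-2010-ISS}. So there is nothing in the paper to compare your argument against directly.

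That said, your argument is correct in the setting of the paper (uniform $\xi_i$ with Legendre polynomials, and it also covers the Hermite/lognormal case in Section~\ref{sec:numerical}, since Hermite polynomials likewise have definite parity). The parity step is sound: shifted Legendre polynomials on $[0,1]$ satisfy $\tilde P_n(1-x)=(-1)^n\tilde P_n(x)$, so each univariate factor $\mathbb{E}[p_a p_b p_c]$ vanishes unless $a+b+c$ is even, and summing over coordinates forces $|\alpha^{(i)}|+|\alpha^{(j)}|+|\alpha^{(k)}|$ even; with $|\alpha^{(j)}|=|\alpha^{(k)}|=\ell$ this rules out $|\alpha^{(i)}|=1$, leaving only $i=0$, where orthogonality finishes the job.

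One remark: your parity argument is specific to polynomials orthogonal with respect to a \emph{symmetric} measure. The result as cited holds for arbitrary orthogonal polynomial families (e.g.\ Laguerre, where there is no parity). The standard, more general proof uses the three-term recurrence $\xi_r\,p_n(\xi_r)=a_n p_{n+1}+b_n p_n+c_n p_{n-1}$: it shows that $\xi_r\psi_j$ is a linear combination of basis functions with multi-indices $\alpha^{(j)}\pm e_r$ and $\alpha^{(j)}$, none of which can equal $\alpha^{(k)}$ when $|\alpha^{(j)}|=|\alpha^{(k)}|=\ell$ and $j\neq k$. This gives $c_{ijk}=0$ for $i=1,\dots,N$ without any symmetry assumption, and is presumably the argument in \cite{Rosseel-2010-ISS}. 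For the purposes of this paper either route suffices, but if you keep the parity version you should flag that it relies on the symmetry of the underlying measure.
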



The global problem~(\ref{eq:global-matrix}) can be equivalently written as%
\begin{equation}
A_{P}u_{P}=f_{P}, \label{eq:global-problem}%
\end{equation}
with the matrix $A_{P}$ having a hierarchical structure
\begin{equation}
A_{\ell}=\left[
\begin{array}
[c]{cc}%
A_{\ell-1} & B_{\ell}\\
C_{\ell} & D_{\ell}%
\end{array}
\right]  ,\qquad\ell=P,\dots,1, \label{eq:global-matrix-hierarchy}%
\end{equation}
where the subscript$~\ell$ stands for the blocks obtained by an approximation
by the $\ell-$th degree stochastic polynomial (or lower), and all of the
blocks $D_{\ell}$ are block diagonal. In particular the smallest case is given
by the finite element approximation with the mean values of the coefficients,
and therefore the \emph{mean-value problem} is
\begin{equation}
A_{0}u_{0}=f_{0}, \label{eq:mean-value-problem}%
\end{equation}
and in particular $A_{0}=K_{0}$. In this paper, we will assume that the
inverse of $A_{0}$ is known, or at least that we have a good preconditioner
$M_{0}$\ readily available.

\begin{remark}
Clearly, if all of the matrices$~K_{i}$ are symmetric, the global
matrix$~A_{P}$ and all of its submatrices$~A_{\ell}$\ will be symmetric as
well, i.e.,
\[
A_{\ell}=\left[
\begin{array}
[c]{cc}%
A_{\ell-1} & B_{\ell}\\
B_{\ell}^{T} & D_{\ell}%
\end{array}
\right]  ,\qquad\ell=P,\dots,1.
\]
However, for the sake of generality, we will use the non-symmetric
notation~(\ref{eq:global-matrix-hierarchy}). We note that a question under
what conditions is the global problem positive definite is far more delicate,
in general depends on the type of the polynomial expansion and also on the
choice of the covariance function.
\end{remark}

In the next section we introduce our preconditioner, taking advantage of the
hierarchical structure and of the fact that the matrices $D_{\ell}$, where
$\ell=P,\dots,1$, are block diagonal.

\section{Schur complement preconditioner}

\label{sec:schur}Let us find an inverse of a general block matrix given as%
\begin{equation}
\left[
\begin{array}
[c]{cc}%
A & B\\
C & D
\end{array}
\right]  , \label{eq:block-matrix}%
\end{equation}
assuming that we can easily compute the inverse of $D$. By block
LU\ decomposition, we can derive
\begin{equation}
\left[
\begin{array}
[c]{cc}%
A & B\\
C & D
\end{array}
\right]  =\left[
\begin{array}
[c]{cc}%
I_{A} & BD^{-1}\\
0 & I_{D}%
\end{array}
\right]  \left[
\begin{array}
[c]{cc}%
S & 0\\
0 & D
\end{array}
\right]  \left[
\begin{array}
[c]{cc}%
I_{A} & 0\\
D^{-1}C & I_{D}%
\end{array}
\right]  , \label{eq:block-LU}%
\end{equation}
where $S=A-BD^{-1}C$ is the Schur complement of$~D$ in~(\ref{eq:block-matrix}%
). Inverting the three blocks, we obtain
\begin{equation}
\left[
\begin{array}
[c]{cc}%
A & B\\
C & D
\end{array}
\right]  ^{-1}=\left[
\begin{array}
[c]{cc}%
I_{A} & 0\\
-D^{-1}C & I_{D}%
\end{array}
\right]  \left[
\begin{array}
[c]{cc}%
S^{-1} & 0\\
0 & D^{-1}%
\end{array}
\right]  \left[
\begin{array}
[c]{cc}%
I_{A} & -BD^{-1}\\
0 & I_{D}%
\end{array}
\right]  . \label{eq:block-inverse}%
\end{equation}

The hierarchical Schur complement preconditioner is based on the block
inverse~(\ref{eq:block-inverse}). In the action of the preconditioner,
application of the three blocks on the right-hand side
of~(\ref{eq:block-inverse}) will be called (in the order in which they are
performed) as \emph{pre-correction}, \emph{correction} and
\emph{post-correction}.


So, in the action of the preconditioner we would like to approximate
problem~(\ref{eq:global-problem}) which with respect
to~(\ref{eq:global-matrix-hierarchy}) can be written as%
\begin{equation}
\left[
\begin{array}
[c]{cc}%
A_{P-1} & B_{P}\\
C_{P} & D_{P}%
\end{array}
\right]  \left[
\begin{array}
[c]{c}%
u_{P}^{P-1}\\
u_{P}^{P}%
\end{array}
\right]  =\left[
\begin{array}
[c]{c}%
f_{P}^{P-1}\\
f_{P}^{P}%
\end{array}
\right]  . \label{eq:global-matrix-hierarchy-2}%
\end{equation}
The matrix inverse can be with respect to (\ref{eq:block-inverse}) written as%
\[
\left[
\begin{array}
[c]{cc}%
A_{P-1} & B_{P}\\
C_{P} & D_{P}%
\end{array}
\right]  ^{-1}=\left[
\begin{array}
[c]{cc}%
I_{A} & 0\\
-D_{P}^{-1}C_{P} & I_{D}%
\end{array}
\right]  \left[
\begin{array}
[c]{cc}%
S_{P-1}^{-1} & 0\\
0 & D_{P}^{-1}%
\end{array}
\right]  \left[
\begin{array}
[c]{cc}%
I_{A} & -B_{P}D_{P}^{-1}\\
0 & I_{D}%
\end{array}
\right]  ,
\]
where%
\[
S_{P-1}=A_{P-1}-B_{P}D_{P}^{-1}C_{P}.
\]
Because computing (and inverting) the Schur complement $S_{P-1}$ explicitly is
computationally prohibitive, we suggest to replace the inverse of $S_{P-1}$ by
the inverse of~$A_{P-1}$. Since $A_{P-1}$ has the hierarchical structure as
described by~(\ref{eq:global-matrix-hierarchy}), i.e.,
\[
A_{P-1}=\left[
\begin{array}
[c]{cc}%
A_{P-2} & B_{P-1}\\
C_{P-1} & D_{P-1}%
\end{array}
\right]  ,
\]
we can approximate its inverse again using the idea of (\ref{eq:block-inverse}%
) and so on.
Eventually, we arrive at the Schur complement of the mean-value problem$~S_{0}%
$\ which we replace by$~A_{0}$.
Thus the action of this hierarchical\ preconditioner$~M_{P}$ consists of a
number of pre-correction steps performed on the levels$~\ell=P,\dots,1$,
solving the \textquotedblleft mean-value\textquotedblright\ problem
with$~A_{0}$ on the lowest level, and performing a number of the
post-processing steps sweeping up the levels. We now formulate the
preconditioner for the iterative solution of the global
problem~(\ref{eq:global-problem}) more concisely as:

\begin{algorithm}
[Hierarchical Schur complement preconditioner]%
\label{alg:hierarchical-schur-prec} The preconditioner $M_{P}:r_{P}\longmapsto
u_{P}$\ is defined as follows:

\noindent\textbf{for} $\ell=P,\ldots1$\textbf{,}

\begin{description}
\item split the residual, based on the hierarchical structure of matrices, as
\[
r_{\ell}=\left[
\begin{array}
[c]{c}%
r_{\ell}^{\ell-1}\\
r_{\ell}^{\ell}%
\end{array}
\right]  ,
\]

\item compute the pre-correction as
\[
g_{\ell-1}=r_{\ell}^{\ell-1}-B_{\ell}D_{\ell}^{-1}r_{\ell}^{\ell}.
\]

\item If $\ell>1$, set
\[
r_{\ell-1}=g_{\ell-1}.
\]

\item Else (if $\ell=1)$, solve the system $A_{0}u_{0}=g_{0}$.

\end{description}

\noindent\textbf{end}

\noindent\textbf{for} $\ell=1,\ldots P$\textbf{,}

\begin{description}
\item compute the post-correction, i.e., set $u_{\ell}^{\ell-1}=u_{\ell-1}$,
solve
\[
u_{\ell}^{\ell}=D_{\ell}^{-1}\left(  r_{\ell}^{\ell}-C_{\ell}u_{\ell}^{\ell
-1}\right)  ,
\]

\item and concatenate%
\[
u_{\ell}=\left[
\begin{array}
[c]{c}%
u_{\ell}^{\ell-1}\\
u_{\ell}^{\ell}%
\end{array}
\right]  .
\]

\item If $\ell<P,$ set $u_{\ell+1}^{\ell}=u_{\ell}$.
\end{description}

\noindent\textbf{end}
\end{algorithm}

We will now restrict our considerations to the case when all of the matrices~
$A_{\ell}$, $\ell=P,\dots,1$\ are symmetric, positive definite. In this case,
the decomposition~(\ref{eq:block-LU}) can be written for all levels $\ell$ as
\[
A_{\ell}=\left[
\begin{array}
[c]{cc}%
A_{\ell-1} & B_{\ell}\\
B_{\ell}^{T} & D_{\ell}%
\end{array}
\right]  =\left[
\begin{array}
[c]{cc}%
I_{A} & B_{\ell}D_{\ell}^{-1}\\
0 & I_{D}%
\end{array}
\right]  \left[
\begin{array}
[c]{cc}%
S_{\ell-1} & 0\\
0 & D_{\ell}%
\end{array}
\right]  \left[
\begin{array}
[c]{cc}%
I_{A} & 0\\
D_{\ell}^{-1}B_{\ell}^{T} & I_{D}%
\end{array}
\right]  .
\]
Because all of the matrices $A_{\ell}$, $\ell=P,\dots,1$ are positive
definite, the above becomes a set of congruence transformations and by the
Sylvester law of inertia, all of the Schur complements$~S_{\ell}$,
$\ell=P-1,\dots,0$ are also symmetric positive definite. Thus, we can
establish for appropriate vectors~$u$ the next set of
inequalities,
\begin{equation}
c_{\ell,1}\left\Vert u\right\Vert _{A_{\ell}}^{2}\leq\left\Vert u\right\Vert
_{S_{\ell}}^{2}\leq c_{\ell,2}\left\Vert u\right\Vert _{A_{\ell}}^{2}%
,\qquad\ell=0,\dots,P-1, \label{eq:AS-ineq}%
\end{equation}
where $\left\Vert u\right\Vert _{A}^{2}=u^{T}Au$ denotes the energy norm, and
use it in the following:

\begin{theorem}
\label{thm:cond}For the symmetric, positive definite matrix$~A_{P}$ the
preconditioner$~M_{P}$ defined by Algorithm~\ref{alg:hierarchical-schur-prec}%
\ is also positive definite, and the condition number $\kappa$\ of the
preconditioned system is bounded by
\[
\kappa=\frac{\lambda_{\max}\left(  M_{P}A_{P}\right)  }{\lambda_{\min}\left(
M_{P}A_{P}\right)  }\leq C,\qquad\text{where }C=\Pi_{\ell=0}^{P-1}%
\frac{c_{\ell,2}}{c_{\ell,1}}.
\]

\end{theorem}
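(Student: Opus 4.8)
The plan is to analyze the action of $M_P$ recursively, mirroring the recursive definition of the preconditioner. Write $M_\ell$ for the operator obtained by running Algorithm~\ref{alg:hierarchical-schur-prec} on level $\ell$ (so $M_P$ is the full preconditioner and the recursion bottoms out at $M_0 = A_0^{-1}$). The key structural fact is that one sweep of the algorithm at level $\ell$ is precisely the block factorization~(\ref{eq:block-inverse}) applied to $A_\ell = \left[\begin{smallmatrix} A_{\ell-1} & B_\ell \\ B_\ell^T & D_\ell \end{smallmatrix}\right]$, except that the central block $S_{\ell-1}^{-1}$ is replaced by $M_{\ell-1}$ (and $D_\ell^{-1}$ is applied exactly). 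In other words,
\[
M_\ell = \left[\begin{array}{cc} I_A & 0 \\ -D_\ell^{-1}B_\ell^T & I_D \end{array}\right]\left[\begin{array}{cc} M_{\ell-1} & 0 \\ 0 & D_\ell^{-1} \end{array}\right]\left[\begin{array}{cc} I_A & -B_\ell D_\ell^{-1} \\ 0 & I_D \end{array}\right].
\]
Since the outer factors are mutually transpose and invertible, and $D_\ell^{-1}$ is symmetric positive definite (by the Lemma, $D_\ell$ is diagonal, and it is a principal submatrix of the s.p.d.\ $A_\ell$), $M_\ell$ is symmetric positive definite whenever $M_{\ell-1}$ is. Induction starting from $M_0 = A_0^{-1} \succ 0$ gives positive definiteness of $M_P$.

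For the condition number, I would compare $M_\ell^{-1}$ with $A_\ell$ in the Loewner order. From the factorization above,
\[
M_\ell^{-1} = \left[\begin{array}{cc} I_A & B_\ell D_\ell^{-1} \\ 0 & I_D \end{array}\right]^{-T}\!\!\left[\begin{array}{cc} M_{\ell-1}^{-1} & 0 \\ 0 & D_\ell \end{array}\right]\left[\begin{array}{cc} I_A & B_\ell D_\ell^{-1} \\ 0 & I_D \end{array}\right]^{-1},
\]
while the exact factorization~(\ref{eq:block-LU}) gives the same expression with $M_{\ell-1}^{-1}$ replaced by $S_{\ell-1}$. Because the flanking congruences are identical, establishing $\alpha A_\ell \preceq M_\ell^{-1} \preceq \beta A_\ell$ reduces, after conjugating by the (common) triangular factor, to comparing $\mathrm{diag}(M_{\ell-1}^{-1}, D_\ell)$ with $\mathrm{diag}(S_{\ell-1}, D_\ell)$ — i.e.\ the $D_\ell$ block matches exactly and we only need $\alpha S_{\ell-1} \preceq M_{\ell-1}^{-1} \preceq \beta S_{\ell-1}$. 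Now chain two estimates: the induction hypothesis $\alpha_{\ell-1} A_{\ell-1} \preceq M_{\ell-1}^{-1} \preceq \beta_{\ell-1} A_{\ell-1}$, and the spectral equivalence~(\ref{eq:AS-ineq}) between $S_{\ell-1}$ and $A_{\ell-1}$, which rewritten in Loewner form reads $c_{\ell-1,1} A_{\ell-1} \preceq S_{\ell-1} \preceq c_{\ell-1,2} A_{\ell-1}$. Combining,
\[
\frac{\alpha_{\ell-1}}{c_{\ell-1,2}}\, S_{\ell-1} \preceq M_{\ell-1}^{-1} \preceq \frac{\beta_{\ell-1}}{c_{\ell-1,1}}\, S_{\ell-1},
\]
so the ratio of upper to lower constants for level $\ell$ is that for level $\ell-1$ multiplied by $c_{\ell-1,2}/c_{\ell-1,1}$. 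Starting from $M_0^{-1} = A_0 = S_0$ (ratio $1$ at level $0$; note $A_0$ has no proper sub-splitting so the recursion is clean here), an easy induction yields
\[
\Big(\textstyle\prod_{\ell=0}^{P-1} c_{\ell,2}\Big)^{-1}\! \cdot \kappa\text{-free constant} \;\;\Longrightarrow\;\; \gamma\, A_P \preceq M_P^{-1} \preceq \Gamma\, A_P \quad\text{with } \Gamma/\gamma = \prod_{\ell=0}^{P-1} \frac{c_{\ell,2}}{c_{\ell,1}} = C.
\]
Since $\kappa(M_P A_P) = \lambda_{\max}(M_P A_P)/\lambda_{\min}(M_P A_P)$ equals the ratio of the equivalence constants between $M_P^{-1}$ and $A_P$, this gives $\kappa \le C$.

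The step I expect to require the most care is the reduction "comparing $M_\ell^{-1}$ to $A_\ell$ reduces to comparing $M_{\ell-1}^{-1}$ to $S_{\ell-1}$." One must check that the $2\times2$ block-triangular congruence genuinely decouples the two diagonal blocks so that the $D_\ell$ block contributes the identity factor and does not degrade the constants — this uses that the algorithm applies $D_\ell^{-1}$ exactly rather than approximately. A secondary subtlety is bookkeeping the direction of inequalities when passing between $M_\ell$ and $M_\ell^{-1}$ and between $S_\ell$ and $A_\ell$: the Loewner order reverses under inversion, so one should state~(\ref{eq:AS-ineq}) and the induction hypothesis consistently (I would phrase everything in terms of the energy-norm equivalence $c\,\|u\|_X^2 \le \|u\|_Y^2 \le c'\,\|u\|_X^2$ as in the excerpt, which is inversion-stable in the sense that it transfers to $\|\cdot\|_{X^{-1}}$ with swapped constants). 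Everything else is routine linear algebra: congruence preserves inertia (already invoked before the theorem), products of spectral-equivalence constants multiply, and the generalized eigenvalues of $M_P A_P$ lie in $[\gamma,\Gamma]$.
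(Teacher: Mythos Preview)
Your proposal is correct and follows essentially the same approach as the paper: recursively replacing each Schur complement $S_{\ell}$ by $A_{\ell}$ and multiplying the resulting spectral-equivalence ratios from~(\ref{eq:AS-ineq}). In fact your write-up is considerably more detailed than the paper's two-sentence proof, and you correctly flag the one genuine subtlety---that the exactly-solved $D_{\ell}$ block contributes the eigenvalue~$1$ to the preconditioned spectrum, so the recursion goes through cleanly provided the constants are normalized with $c_{\ell,1}\le 1\le c_{\ell,2}$ (which is harmless since $S_{\ell}\preceq A_{\ell}$ forces the optimal upper constant to be at most~$1$).
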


\begin{proof}
The bound follows directly from the sequential replacement of the Schur
complement operators~$S_{\ell}$ by the hierarchical matrices~$A_{\ell}$\ in
Algorithm~\ref{alg:hierarchical-schur-prec}, and the bounds in the equivalence~(\ref{eq:AS-ineq}).
\end{proof}

Hence, the convergence rate can be established from the spectral
equivalence~(\ref{eq:AS-ineq}).

\begin{remark}
Despite the multiplicative growth of the condition number bound as predicted
by Theorem~\ref{thm:cond} from our numerical experiments
(Table~\ref{tab:Legendre-var-P}) it appears that, at least in the case of
uniform random variables and
Legendre polynomials, the ratio of the constants in~(\ref{eq:AS-ineq}) is
close to one and hence the convergence of conjugate gradients is not as
pessimistic as predicted by the bound.

\end{remark}

In the next section, we discuss several modifications of the method and the preconditioner.

\section{Variants and implementation remarks}

\label{sec:variants}Clearly, there are many other ways of setting up a
hierarchical preconditioner. These possibilities follow by considering the
block inverse~(\ref{eq:block-inverse}) and writing it in a more general form,
which can be subsequently used in the approximation of the preconditioner from
Algorithm~\ref{alg:hierarchical-schur-prec}, as%
\begin{equation}
M=\left[
\begin{array}
[c]{cc}%
I_{A} & 0\\
-M_{D}^{3}C & I_{D}%
\end{array}
\right]  \left[
\begin{array}
[c]{cc}%
M_{S} & 0\\
0 & M_{D}^{2}%
\end{array}
\right]  \left[
\begin{array}
[c]{cc}%
I_{A} & -BM_{D}^{1}\\
0 & I_{D}%
\end{array}
\right]  , \label{eq:general-prec}%
\end{equation}
so that $M_{D}^{i}$\ , $i=1,2,3$, approximate $D^{-1}$\ and$~M_{S}$
approximates$~S^{-1}$. Our main approximation in
Algorithm~\ref{alg:hierarchical-schur-prec} is in using the hierarchy of
matrices $A_{\ell}$, $\ell=P-1,\dots,0$ in place of$~M_{S}$\ on each level.
Next, in our case$~D$ is block-diagonal. Thus computing its inverse means
solving independently a number of systems, where each one of them has the same
size (and sparsity structure) as the deterministic problem for the mean. In
fact, the diagonal blocks are just scalar multiples of the \textquotedblleft
mean-value\textquotedblright\ matrix$~K_{0}(=A_{0})$. In our implementation,
we have replaced the exact solves of$~D$ by independent loops of
preconditioned Krylov subspace iterations for each diagonal block of$~D$ using
the mean-value preconditioner$~M_{0}$. In the numerical experiments we have
tested convergence with the following choices of$~M_{0}$: no preconditioner,
simple diagonal preconditioner, and the exact LU\ decomposition of the
block$~A_{0}$ (which converges in one iteration). So this variant of the
hierarchical Schur complement preconditioner involves multiple loops of inner
iterations and thus possibly changes in every outer iteration. In order to
accommodate such variable preconditioner, it is generally\ recommended to use
a flexible Krylov subspace method such as flexible CG~\cite{Notay-2000-FCG},
FGMRES~\cite{Saad-1993-FIP}, or GMRESR~\cite{vanderVorst-1994-GFN}.
Nevertheless, we have observed essentially the same convergence in terms of
outer iterations with both variants of the conjugate gradients, the flexible
and the standard one as well. The convergence seems also to be independent of
the choice of$~M_{0}$ and in this contribution we do not advocate any specific
choice. Next, one can in general replace the action of any$~M_{D}^{i}$,
$i=1,2,3$, by the action of just$~M_{0}$ itself. However it is well-known from
iterative substructuring cf., e.g.,~\cite[Section 4.4]{Smith-1996-DD}, that
even if $M_{D}$ is spectrally equivalent to$~D^{-1}$, the resulting
preconditioner might not be spectrally equivalent to the original problem.

It also appears that one can modify not only the preconditioner, but also the
set up of the method itself. Namely, inspired by the iterative substructuring
cf., e.g.,~\cite{Toselli-2005-DDM}, one can reduce the system given by$~A_{P}$
to the system given by the Schur complement~$S_{P-1}$ used subsequently in the
iterations. So, in the first step, cf.~(\ref{eq:global-matrix-hierarchy-2}),
we eliminate $u_{P}^{P}$ and define $u_{P-1}\equiv u_{P}^{P-1}$, which yields
\begin{equation}
S_{P-1}u_{P-1}=g_{P-1}, \label{eq:global-Schur-system}%
\end{equation}
where%
\[
S_{P-1}=A_{P-1}-B_{P}D_{P}^{-1}C_{P},\quad\text{and}\quad g_{P-1}=f_{P}%
^{P-1}-B_{P}D_{P}^{-1}f_{P}^{P}.
\]
After convergence, the variables $u_{P}^{P}$ are recovered from
\[
u_{P}^{P}=D_{P}^{-1}\left(  f_{P}^{P}-C_{P}u_{P-1}\right)  .
\]
There are two advantages of the a-priori elimination of the second block:
first, because the system (\ref{eq:global-Schur-system}) will be solved
iteratively, the iterations can be performed on a much smaller system and
also, at least for symmetric, positive definite problems, the condition number
of the Schur complement cannot be higher than the one of the original
problem~\cite{Smith-1996-DD} even if one uses a diagonal
preconditioning~\cite{Mandel-1990-BDS}. The preconditioner~$M_{P-1}$ for the
system~(\ref{eq:global-Schur-system}) is then the same as in
Algorithm~\ref{alg:hierarchical-schur-prec} except that the for-loops are
performed only for all levels$~\ell=1,\dots,P-1$. However, this reduction is
theoretically justified only when exact solves for the block diagonal
matrix~$D_{P}$\ are available. In general, if one uses only approximate
solves, e.g., by performing inner/outer Krylov iterations for $D_{P}$
and\ $S_{P-1}$\ respectively, the global system matrix becomes variable as
well, this might lead to the loss of orthogonality and poor performance of the
method. Our numerical experiments indicated that the preconditioned iterations
for $A_{P}$ and $S_{P-1}$ perform identically, but we do not advocate to use
a-priori reduction to the Schur complement in general.

\section{Numerical examples}

\label{sec:numerical}
We have implemented the stochastic Galerkin finite element method for the
model elliptic problem~(\ref{eq:model-1})-(\ref{eq:model-2}) on a square
domain $\left[  0,1\right]  \times\left[  0,1\right]  $ uniformly discretized
by $10\times10$ Lagrangean bilinear finite elements. The mean value of the
coefficient$~k$ was set to $k_{0}=1$. The coefficients in the covariance
function~$C$ defined by eq.~(\ref{eq:cov}) were set to $L=0.5$ and
$\sigma=0.5$, so the coefficient of variation is given as $CoV=\sigma
/k_{0}=\sigma=50\%$. The $15$ dominant eigenvalues of the discretized
eigenvalue problem~(\ref{eq:int-eigproblem}) are shown in
Figure~\ref{fig:eigs}. We have studied convergence of the flexible version of
the conjugate gradient method (FCG) without a preconditioner, with a global
mean-based preconditioner$~M_{\mathrm{m}}$ by Powell and
Elman~\cite{Powell-2009-BDP},
with the block symmetric Gauss-Seidel preconditioner~$M_{\mathrm{bGS}}$ (with
zero initial guess) and with the hierarchical Schur complement
preconditioner$~M_{\mathrm{HS}}$. The convergence results
are summarized in Tables~\ref{tab:Legendre-var-N}-\ref{tab:Legendre-var-h}. We
have observed essentially the same convergence of the standard conjugate
gradients compared to the flexible\ version, which is reported in the tables.
Also, in our experience, the convergence rates were independent of the choice
of the mean-value preconditioner$~M_{0}$ (no preconditioner, diagonal
preconditioner and the LU-decomposition of the \textquotedblleft
mean-value\textquotedblright\ block$~A_{0}$) used in inner iterations of the
preconditioner for the diagonal block solves with the same relative residual
tolerance as in the outer iterations. From Tables~\ref{tab:Legendre-var-N}
and~\ref{tab:Legendre-var-P} it appears that the convergence depends only
mildly on the stochastic dimension$~N$ and the order of polynomial
expansion$~P$, respectively.
Table~\ref{tab:Legendre-var-sigma} indicates a modest dependence on the value
of the standard deviation $\sigma$, and finally Table~\ref{tab:Legendre-var-h}
indicates that the convergence is independent of the mesh size$~h$.
We note that for $CoV>55\%$ the problem is no longer guaranteed to be
elliptic, and the global matrix$~A$ is not positive definite.

\begin{figure}[ptb]
\begin{center}
\includegraphics[width=12.5cm]{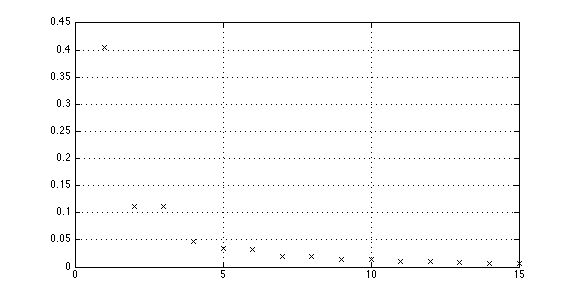}
\end{center}
\caption{The $15$ dominant eigenvalues with the covariance kernel
(\ref{eq:cov}) (in this plot $\sigma=1)$.}%
\label{fig:eigs}%
\end{figure}

\begin{table}[pth]
\caption{Convergence of (flexible) conjugate gradients for the full system
matrix $A$, for $A$ preconditioned by the mean-based preconditioner
$M_{\mathrm{m}}$, by the block Gauss-Seidel preconditioner $M_{\mathrm{bGS}}$,
and by the hierarchical Schur complement preconditioner $M_{\mathrm{HS}}$. The
coefficient of variation of the uniform random field is $CoV=50\%$, polynomial
degree is $P=4$, and the stochastic dimension $N$ is variable. Here, $ndof$ is
the dimension of $A$, $iter$ is the number of iterations with the relative
residual tolerance~$10^{-8}$, and $\kappa$ is the condition number estimate
from the L\'{a}nczos sequence in conjugate gradients.}%
\label{tab:Legendre-var-N}
\centering
\begin{tabular}
[c]{|cc|cc|cc|cc|cc|}\hline
\multicolumn{2}{|c|}{setup} & \multicolumn{2}{|c|}{$A$} &
\multicolumn{2}{|c|}{$M_{\mathrm{m}}A$} &
\multicolumn{2}{|c|}{$M_{\mathrm{bGS}}A$} &
\multicolumn{2}{|c|}{$M_{\mathrm{HS}}A$}\\\hline
$N$ & $ndof$ & $iter$ & $\kappa$ & $iter$ & $\kappa$ & $iter$ & $\kappa$ &
$iter$ & $\kappa$\\\hline
1 & 605 & 173 & 1965.4 & 12 & 2.0127 & 5 & 1.0507 & 5 & 1.0465\\
2 & 1815 & 531 & 5333.3 & 15 & 2.7340 & 6 & 1.1279 & 6 & 1.1236\\
3 & 4235 & 745 & 9876.9 & 16 & 2.9995 & 7 & 1.1693 & 6 & 1.1514\\
4 & 8470 & 902 & 17,150.2 & 17 & 3.3413 & 7 & 1.2131 & 7 & 1.2028\\
5 & 15,246 & 1033 & 17,275.8 & 18 & 3.5891 & 7 & 1.2447 & 7 & 1.2434\\
6 & 25,410 & 1037 & 17,333.5 & 18 & 3.6349 & 7 & 1.2501 & 7 & 1.2559\\
7 & 39,930 & 1040 & 17,348.9 & 19 & 4.0993 & 8 & 1.3202 & 7 & 1.3146\\
8 & 59,895 & 1081 & 17,360.6 & 19 & 4.0597 & 8 & 1.3198 & 7 & 1.3182\\\hline
\end{tabular}
\end{table}

\begin{table}[pth]
\caption{Convergence of (flexible) conjugate gradients for the full system
matrix $A$, for $A$ preconditioned by the mean-based preconditioner
$M_{\mathrm{m}}$, by the block Gauss-Seidel preconditioner $M_{\mathrm{bGS}}$,
and by the hierarchical Schur complement preconditioner $M_{\mathrm{HS}}$. The
stochastic dimension is $N=4$, $CoV=50\%$, and the polynomial degree $P$ is
variable. The other headings are same as in Table~\ref{tab:Legendre-var-N}.}%
\label{tab:Legendre-var-P}%
\centering
\begin{tabular}
[c]{|cc|cc|cc|cc|cc|}\hline
\multicolumn{2}{|c|}{setup} & \multicolumn{2}{|c|}{$A$} &
\multicolumn{2}{|c|}{$M_{\mathrm{m}}A$} &
\multicolumn{2}{|c|}{$M_{\mathrm{bGS}}A$} &
\multicolumn{2}{|c|}{$M_{\mathrm{{HS}}}A$}\\\hline
$P$ & $ndof$ & $iter$ & $\kappa$ & $iter$ & $\kappa$ & $iter$ & $\kappa$ &
$iter$ & $\kappa$\\\hline
1 & 605 & 134 & 625.6 & 9 & 1.6391 & 5 & 1.0626 & 5 & 1.0624\\
2 & 1815 & 315 & 1903.2 & 13 & 2.2379 & 6 & 1.1117 & 6 & 1.1109\\
3 & 4235 & 586 & 5721.1 & 15 & 2.8122 & 7 & 1.1658 & 6 & 1.1559\\
4 & 8470 & 902 & 17,150.2 & 17 & 3.3413 & 7 & 1.2131 & 7 & 1.2028\\
5 & 15,246 & 1402 & 29,751.0 & 18 & 3.7824 & 7 & 1.2538 & 7 & 1.2426\\
6 & 25,410 & 1943 & 49,842.4 & 19 & 4.1534 & 8 & 1.2921 & 7 & 1.2798\\
7 & 39,930 & 2568 & 83,056.6 & 20 & 4.4708 & 8 & 1.3219 & 7 & 1.3125\\
8 & 59,895 & 3267 & 136,419.0 & 20 & 4.7371 & 8 & 1.3472 & 7 & 1.3398\\\hline
\end{tabular}
\end{table}

\begin{table}[pth]
\caption{Convergence of (flexible) conjugate gradients for the full system
matrix $A$, its first Schur complement $S$, for $A$ preconditioned by the
global mean-based preconditioner $M_{\mathrm{m}}$, by the block Gauss-Seidel
preconditioner $M_{\mathrm{bGS}}$, and by the hierarchical Schur complement
preconditioner $M_{\mathrm{HS}}$. Here, the size of $A$ is $8470$ $ndof$, the
stochastic dimension is $N=4$, the polynomial degree is $P=4$, the mean is
$k_{0}=1$, and the coefficient of variation $CoV$ is variable. The other
headings are same as in Table~\ref{tab:Legendre-var-N}.}%
\label{tab:Legendre-var-sigma}%
\centering
\begin{tabular}
[c]{|c|cc|cc|cc|cc|}\hline
\multicolumn{1}{|c|}{setup} & \multicolumn{2}{|c|}{$A$} &
\multicolumn{2}{|c|}{$M_{\mathrm{m}}A$} &
\multicolumn{2}{|c|}{$M_{\mathrm{bGS}}A$} &
\multicolumn{2}{|c|}{$M_{\mathrm{{HS}}}A$}\\\hline
$CoV(\%)$ & $iter$ & $\kappa$ & $iter$ & $\kappa$ & $iter$ & $\kappa$ & $iter$
& $\kappa$\\\hline
5 & 694 & 15,556.3 & 6 & 1.0960 & 3 & 1.0008 & 3 & 1.0009\\
15 & 739 & 15,673.2 & 9 & 1.3514 & 4 & 1.0090 & 4 & 1.0089\\
25 & 804 & 15,912.5 & 11 & 1.7021 & 5 & 1.0314 & 5 & 1.0304\\
35 & 833 & 16,286.1 & 13 & 2.1808 & 6 & 1.0770 & 5 & 1.0664\\
45 & 877 & 16,815.9 & 16 & 2.8773 & 6 & 1.1510 & 6 & 1.1414\\
55 & 926 & 17,539.6 & 19 & 3.9523 & 8 & 1.2948 & 7 & 1.2830\\\hline
\end{tabular}
\end{table}

\begin{table}[pth]
\caption{Convergence of (flexible) conjugate gradients for the full system
matrix $A$, for $A$ preconditioned by the global mean-based preconditioner
$M_{\mathrm{m}}$, by the block Gauss-Seidel preconditioner $M_{\mathrm{bGS}}$,
and by the hierarchical Schur complement preconditioner $M_{\mathrm{HS}}$.
Here, the stochastic dimension is $N=4$, the polynomial degree is $P=4$, the
mean is $k_{0}=1$, the coefficient of variation is $CoV=50\%$, and the size of
the finite element mesh~$h$ is variable. The other headings are same as in
Table~\ref{tab:Legendre-var-N}.}%
\label{tab:Legendre-var-h}%
\centering
\begin{tabular}
[c]{|cc|cc|cc|cc|cc|}\hline
\multicolumn{2}{|c|}{setup} & \multicolumn{2}{|c|}{$A$} &
\multicolumn{2}{|c|}{$M_{\mathrm{m}}A$} &
\multicolumn{2}{|c|}{$M_{\mathrm{bGS}}A$} &
\multicolumn{2}{|c|}{$M_{\mathrm{{HS}}}A$}\\\hline
$h$ & $ndof$ & $iter$ & $\kappa$ & $iter$ & $\kappa$ & $iter$ & $\kappa$ &
$iter$ & $\kappa$\\\hline
$1/5$ & 2520 & 404 & 4847.5 & 16 & 3.2484 & 7 & 1.2022 & 6 & 1.1790\\
$1/10$ & 8470 & 902 & 17,150.2 & 17 & 3.3413 & 7 & 1.2131 & 7 & 1.2028\\
$1/15$ & 17,920 & 1386 & 36,716.6 & 17 & 3.3145 & 7 & 1.2063 & 7 & 1.2047\\
$1/20$ & 30,870 & 1883 & 63,535.2 & 17 & 3.3463 & 7 & 1.2110 & 7 & 1.2032\\
$1/25$ & 47,320 & 2383 & 97,605.6 & 17 & 3.3473 & 7 & 1.2112 & 7 & 1.2032\\
$1/30$ & 67,270 & 2872 & 138,929.0 & 17 & 3.3190 & 7 & 1.2070 & 7 &
1.2054\\\hline
\end{tabular}
\end{table}

\begin{table}[pth]
\caption{Numbers of blocks in the full system matrix and the \textquotedblleft
work-count\textquotedblright\ in the application of the preconditioner~$M$,
when one of the parameters $N$ or $P$ is changing and the other one is set to
$4$, cf.~Figure~\ref{fig:NP}. Here $n_{b}$ is the total number of blocks,
$n_{db}$ is the number of diagonal blocks, which is the same as the number of
solves in the application of the mean-based preconditioner $M_{m}$, $n_{m}$ is
the number of block matrix-vector multiplications in the action of the
preconditioner $M$, and $n_{ds}$ is the number of its block diagonal solves.}%
\label{tab:Legendre-work}
\centering
\begin{tabular}
[c]{|c|cc|cc|}\hline
$N$ or $P$ & $n_{b}$ & $n_{db}$ & $n_{m}$ & $n_{ds}$\\\hline
1 & 13 & 5 & 8 & 9\\
2 & 55 & 15 & 40 & 29\\
3 & 155 & 35 & 120 & 69\\
4 & 350 & 70 & 280 & 139\\
5 & 686 & 126 & 560 & 251\\
6 & 1218 & 210 & 1008 & 419\\
7 & 2010 & 330 & 1680 & 659\\
8 & 3135 & 495 & 2640 & 989\\\hline
\end{tabular}
\end{table}

Table~\ref{tab:Legendre-work} summarizes the block count in the structure of
the global Galerkin matrix$~A$ obtained using the KL\ expansion, cf.
Figure~\ref{fig:NP}, when either of the parameters $N$\ or $P$ changes and the
other one is set to be equal to four. The two choices lead to slightly
different block sparsity structures of $A$, however the numbers of blocks are
the same. Let us denote by $n_{b}$ the total number of blocks in$~A$ and by
$n_{db}$ the number of its diagonal blocks. Note that one application of the
mean-based preconditioner requires $n_{db}$ solves of the diagonal blocks. The
columns three and four in Table~\ref{tab:Legendre-work} contain the numbers of
block matrix-vector multiplications$~n_{m}$ and block diagonal solves$~n_{ds}$
performed in one action of the hierarchical Schur preconditioner. From
Algorithm~\ref{alg:hierarchical-schur-prec} we obtain that
\[
n_{m}=n_{b}-n_{db},
\]
where half of multiplications is performed in the first for-loop and the other
half in the second, and%
\[
n_{ds}=2(n_{db}-1)+1,
\]
which follows from the two for-loops and one solve of the first block$~A_{0}$.
Hence one action of the hierarchical Schur preconditioner requires nearly the
same number of computations as one global Galerkin matrix-vector
multiplications, $n_{m}\approx n_{b}$, and two applications of the mean-based
preconditioner, $n_{ds}\approx2n_{db}$. It is important to note that whereas
the application of the mean-based preconditioner can be performed fully in
parallel, the two for-loops in Algorithm~\ref{alg:hierarchical-schur-prec} are
sequential, and thus the eventual parallelization can be performed only within
each step of these for-loops. The work count of ~$M_{\mathrm{bGS}}$, which is
block sequential, is given by $2n_{db}$ diagonal solves, and $1.5$ (or $2$, if
the initial guess of GS is nonzero)\ times of block matrix-vector
multiplications compared to~$M_{\mathrm{HS}}$.

In the second set of experiments, we have tested convergence of the
preconditioner with the same physical domain and parameter setting, except
assuming that the random coefficient$~k$ has lognormal distribution with the
coefficient of variation being set to $CoV=\sigma_{\log}/\mu_{\log}=100\%$. We
note that in order to guarantee existence and uniqueness of the solution, we
have used twice the order of polynomial expansion of the coefficient$~k$ than
of the solution, cf.~\cite{Matthies-2005-GML}. Such discretization is done
within the gPC framework, see Remark~\ref{rem:K-L}, using Hermite
polynomials~\cite{Ghanem-1999-NGS}, and leads to a fully block dense structure
of the global Galerkin matrix$~A$.
Therefore the solves involving submatrices$~D_{\ell}$, $\ell=1,\dots,P$, in
the pre- and post-correction steps are no longer block diagonal. Our numerical
tests using both, direct\ and iterative solves with the$~D_{\ell}$, and using
the same tolerance as for the outer iterations, lead to the same count of
outer iterations. The performance results are summarized in
Tables~\ref{tab:Lognormal-var-N}-\ref{tab:Lognormal-var-h}. The convergence
rate reported in Table~\ref{tab:Lognormal-var-N} indicates a mild dependence
on the stochastic dimension$~N$, Table~\ref{tab:Lognormal-var-P} indicates a
modest dependence on the order of the polynomial expansion$~P$, and
Table~\ref{tab:Lognormal-var-CoV} indicates also a modest dependence on the
coefficient of variation$~CoV$. From Table~\ref{tab:Lognormal-var-h} we see
that the convergence is nearly independent of the mesh size$~h$.
The performance of both preconditioners~$M_{\mathrm{bGS}}$ and~$M_{\mathrm{HS}%
}$\ is significantlly better compared to the mean-based
preconditioner~$M_{\mathrm{m}}$. Also, we see that~$M_{\mathrm{HS}}$ performs
a bit better than~$M_{\mathrm{bGS}}$. However, we must note that
~$M_{\mathrm{HS}}$ is also more computationally intensive because it requires
solves with larger diagonal submatrices$~D_{\ell}$, for all levels
$\ell=1,\dots,P$, and a work count comparison with~$M_{\mathrm{bGS}}$ is not
straightforward. As before, the two for-loops corresponding to
Algorithm~\ref{alg:hierarchical-schur-prec} are sequential, and thus the
eventual paralelisation can be performed only within each step in the for-loop.


\begin{table}[pth]
\caption{Convergence of (flexible) conjugate gradients for the full system
matrix~$A$ obtained by the gPC expansion of the lognormal field, for~$A$
preconditioned by the mean-based preconditioner $M_{\mathrm{m}}$, by the block
Gauss-Seidel preconditioner $M_{\mathrm{bGS}}$, and by the hierarchical Schur
complement preconditioner $M_{\mathrm{HS}}$. Polynomial degree is fixed to
$P=4$, the coefficient of variation of the lognormal random field is
$CoV=100\%$, and the stochastic dimension $N$ is variable. The other headings
are same as in Table~\ref{tab:Legendre-var-N}.}%
\label{tab:Lognormal-var-N}
\centering
\begin{tabular}
[c]{|cc|cc|cc|cc|cc|}\hline
\multicolumn{2}{|c|}{setup} & \multicolumn{2}{|c|}{$A$} &
\multicolumn{2}{|c|}{$M_{\mathrm{m}}A$} & \multicolumn{2}{|c|}{$M_{
\mathrm{bGS}}A$} & \multicolumn{2}{|c|}{$M_{\mathrm{{HS}}}A$}\\\hline
$N$ & $ndof$ & $iter$ & $\kappa$ & $iter$ & $\kappa$ & $iter$ & $\kappa$ &
$iter$ & $\kappa$\\\hline
1 & 605 & 585 & 51,376.4 & 48 & 28.7589 & 15 & 3.4192 & 15 & 3.4000\\
2 & 1815 & 1396 & 58,718.8 & 61 & 37.1593 & 17 & 3.7490 & 16 & 3.6244\\
3 & 4235 & 1770 & 69,054.8 & 62 & 38.0715 & 17 & 3.7380 & 16 & 3.7632\\
4 & 8470 & 2016 & 70,143.6 & 66 & 43.6525 & 19 & 4.2935 & 16 & 4.1669\\\hline
\end{tabular}
\end{table}

\begin{table}[pth]
\caption{Convergence of (flexible) conjugate gradients for the full system
matrix~$A$ obtained by the gPC expansion of the lognormal field, for~$A$
preconditioned by the mean-based preconditioner $M_{\mathrm{m}}$, by the block
Gauss-Seidel preconditioner $M_{\mathrm{bGS}}$, and by the hierarchical Schur
complement preconditioner $M_{\mathrm{HS}}$. Stochastic dimension is fixed to
$N=4$, the coefficient of variation of the lognormal random field is
$CoV=100\%$, and the polynomial degree $P$ is variable. The other headings are
same as in Table~\ref{tab:Legendre-var-N}.}%
\label{tab:Lognormal-var-P}
\centering
\begin{tabular}
[c]{|cc|cc|cc|cc|cc|}\hline
\multicolumn{2}{|c|}{setup} & \multicolumn{2}{|c|}{$A$} &
\multicolumn{2}{|c|}{$M_{\mathrm{m}}A$} &
\multicolumn{2}{|c|}{$M_{\mathrm{bGS}}A$} &
\multicolumn{2}{|c|}{$M_{\mathrm{{HS}}}A$}\\\hline
$P$ & $ndof$ & $iter$ & $\kappa$ & $iter$ & $\kappa$ & $iter$ & $\kappa$ &
$iter$ & $\kappa$\\\hline
1 & 605 & 134 & 578.2 & 15 & 3.4954 & 8 & 1.3910 & 7 & 1.3856\\
2 & 1815 & 329 & 2027.3 & 28 & 8.9450 & 12 & 1.9742 & 10 & 1.9289\\
3 & 4235 & 804 & 10,048.4 & 44 & 20.0366 & 15 & 2.8670 & 13 & 2.7955\\
4 & 8470 & 2016 & 70,143.6 & 66 & 43.6525 & 19 & 4.2935 & 16 & 4.1669\\\hline
\end{tabular}
\end{table}

\begin{table}[pth]
\caption{Convergence of (flexible) conjugate gradients for the full system
matrix~$A$ obtained by the gPC expansion of the lognormal field, for~$A$
preconditioned by the mean-based preconditioner $M_{\mathrm{m}}$, by the block
Gauss-Seidel preconditioner $M_{\mathrm{bGS}}$, and by the hierarchical Schur
complement preconditioner $M_{\mathrm{HS}}$. Here, the size of $A$ is $8470$
$ndof$, the stochastic dimension is $N=4$, the polynomial degree is $P=4$, and
the coefficient of variation of the lognormal field~$CoV$ is variable. The
other headings are same as in Table~\ref{tab:Legendre-var-N}.}%
\label{tab:Lognormal-var-CoV}
\centering
\begin{tabular}
[c]{|c|cc|cc|cc|cc|}\hline
\multicolumn{1}{|c|}{setup} & \multicolumn{2}{|c|}{$A$} &
\multicolumn{2}{|c|}{$M_{\mathrm{m}}A$} &
\multicolumn{2}{|c|}{$M_{\mathrm{bGS}}A$} &
\multicolumn{2}{|c|}{$M_{\mathrm{{HS}}}A$}\\\hline
$CoV\,(\%)$ & $iter$ & $\kappa$ & $iter$ & $\kappa$ & $iter$ & $\kappa$ &
$iter$ & $\kappa$\\\hline
25 & 719 & 7378.4 & 16 & 3.2356 & 7 & 1.1761 & 7 & 1.1776\\
50 & 1039 & 16,014.8 & 29 & 9.3553 & 11 & 1.7685 & 10 & 1.7836\\
75 & 1511 & 35,317.3 & 46 & 22.2147 & 15 & 2.8198 & 13 & 2.8454\\
100 & 2016 & 70,143.6 & 66 & 43.6525 & 19 & 4.2935 & 16 & 4.1669\\
125 & 2591 & 116,678.0 & 85 & 72.7584 & 23 & 5.9776 & 19 & 5.5362\\
150 & 3209 & 178,890.0 & 103 & 107.0670 & 26 & 7.7459 & 21 & 6.8507\\\hline
\end{tabular}
\end{table}

\begin{table}[pth]
\caption{Convergence of (flexible) conjugate gradients for the full system
matrix~$A$ obtained by the gPC expansion of the lognormal field, for~$A$
preconditioned by the mean-based preconditioner $M_{\mathrm{m}}$, by the block
Gauss-Seidel preconditioner $M_{\mathrm{bGS}}$, and by the hierarchical Schur
complement preconditioner $M_{\mathrm{HS}}$. Here, the stochastic dimension is
$N=4$, the polynomial degree is $P=4$, the coefficient of variation of the
lognormal random field is $CoV=100\%$, and the size of the finite element
mesh~$h$ is variable. The other headings are same as in
Table~\ref{tab:Legendre-var-N}.}%
\label{tab:Lognormal-var-h}
\centering
\begin{tabular}
[c]{|cc|cc|cc|cc|cc|}\hline
\multicolumn{2}{|c|}{setup} & \multicolumn{2}{|c|}{$A$} &
\multicolumn{2}{|c|}{$M_{\mathrm{m}}A$} &
\multicolumn{2}{|c|}{$M_{\mathrm{bGS}}A$} &
\multicolumn{2}{|c|}{$M_{\mathrm{{HS}}}A$}\\\hline
$h$ & $ndof$ & $iter$ & $\kappa$ & $iter$ & $\kappa$ & $iter$ & $\kappa$ &
$iter$ & $\kappa$\\\hline
$1/5$ & 2520 & 831 & 17,695.3 & 59 & 40.6232 & 18 & 3.9885 & 15 & 3.8361\\
$1/10$ & 8470 & 2016 & 70,143.6 & 66 & 43.6525 & 19 & 4.2935 & 16 & 4.1669\\
$1/15$ & 17,920 & 3377 & 158,334.0 & 68 & 44.4170 & 19 & 4.3764 & 16 &
4.2394\\
$1/20$ & 30,870 & 4395 & 275,686.0 & 69 & 44.8882 & 19 & 4.3742 & 17 &
4.2510\\
$1/25$ & 47,320 & 5600 & 429,551.0 & 69 & 44.9413 & 20 & 4.3986 & 17 &
4.2592\\
$1/30$ & 67,270 & 7180 & 626,475.0 & 71 & 45.1100 & 19 & 4.3732 & 17 &
4.2630\\\hline
\end{tabular}
\end{table}


The numerical experiments presented here were implemented using a sequential
code in \texttt{Matlab}, version \texttt{7.12.0.635 (R2011a)}, and therefore
we do not report on computational times.

\section{Conclusion}

\label{sec:summary} We have presented a hierarchical Schur complement
preconditioner for the iterative solution of the systems of linear algebraic
equations obtained from the stochastic Galerkin finite element
discretizations. The preconditioner takes an advantage of the recursive
hierarchical two-by-two structure of the global matrix, with one of the
submatrices block diagonal. We have compared its convergence using (flexible)
conjugate gradients without any preconditioner, with the mean-based
preconditioner which requires one block diagonal solve per iteration, and with
the block version of the well-known symmetric Gauss-Seidel method used as a
preconditioner. The algorithm of our preconditioner consists of a loop of
diagonal block solves and a multiplication by the upper block triangle in the
pre-correction loop, and of another loop of diagonal block solves and a
multiplication by the lower block triangle in the post-correction loop. The
loops are sequential throughout the hierarchy of the global matrix, but the
block solves are independent within each level. We have also succesfully
tested the preconditioner in the case of the random coefficient with lognormal
distribution. However, in this case the algorithm involves solves (either
direct or of preconditioned inner iterations) with larger submatrices than
just the diagonal blocks, and a direct comparison to the symmetric block
Gauss-Seidel preconditioner in terms of work count is not straightforward.

In conclusion, our algorithm appears to be more effective in terms of
iterations and work count compared to the block version of the symmetric
Gauss-Seidel method.
Our method also allows for the same degree of parallelism as the Gauss-Seidel method,
since both involve solving the block diagonal matrices~$D_\ell$.
It is important to note
that the discussed preconditioners in general rely only on (block-by-block)
matrix-vector multiplies, and their performance will also depend on the choice
of preconditioner$~M_{0}$ for the solves with the diagonal blocks. Clearly,
one can use such solver for each one of the diagonal blocks that might
introduce another level of parallelism, e.g., similarly as recently proposed
in~\cite{Mandel-2012-ABT,Sousedik-2010-AMB-thesis,Sousedik-2011-AMB}. However
such extensions will be studied elsewhere.

\bibliographystyle{wileyj}
\bibliography{schur_nla.bib}

\end{document}